\begin{document}

\newtheorem{theorem}{Theorem}    
\newtheorem{proposition}[theorem]{Proposition}
\newtheorem{conjecture}[theorem]{Conjecture}
\def\theconjecture{\unskip}
\newtheorem{corollary}[theorem]{Corollary}
\newtheorem{lemma}[theorem]{Lemma}
\newtheorem{sublemma}[theorem]{Sublemma}
\newtheorem{observation}[theorem]{Observation}
\theoremstyle{definition}
\newtheorem{definition}{Definition}
\newtheorem{notation}[definition]{Notation}
\newtheorem{remark}[definition]{Remark}
\newtheorem{question}[definition]{Question}
\newtheorem{questions}[definition]{Questions}
\newtheorem{example}[definition]{Example}
\newtheorem{problem}[definition]{Problem}
\newtheorem{exercise}[definition]{Exercise}

\numberwithin{theorem}{section}
\numberwithin{definition}{section}
\numberwithin{equation}{section}

\def\earrow{{\mathbf e}}
\def\rarrow{{\mathbf r}}
\def\uarrow{{\mathbf u}}
\def\tpar{T_{\rm par}}
\def\apar{A_{\rm par}}

\def\reals{{\mathbb R}}
\def\torus{{\mathbb T}}
\def\heis{{\mathbb H}}
\def\integers{{\mathbb Z}}
\def\naturals{{\mathbb N}}
\def\complex{{\mathbb C}\/}
\def\distance{\operatorname{distance}\,}
\def\support{\operatorname{support}\,}
\def\dist{\operatorname{dist}\,}
\def\Span{\operatorname{span}\,}
\def\degree{\operatorname{degree}\,}
\def\kernel{\operatorname{kernel}\,}
\def\dim{\operatorname{dim}\,}
\def\codim{\operatorname{codim}}
\def\trace{\operatorname{trace\,}}
\def\Span{\operatorname{span}\,}
\def\ZZ{ {\mathbb Z} }
\def\p{\partial}
\def\rp{{ ^{-1} }}
\def\Re{\operatorname{Re\,} }
\def\Im{\operatorname{Im\,} }
\def\ov{\overline}
\def\eps{\varepsilon}
\def\lt{L^2}
\def\diver{\operatorname{div}}
\def\curl{\operatorname{curl}}
\def\etta{\eta}
\newcommand{\norm}[1]{ \|  #1 \|}
\def\Span{\operatorname{span}}
\def\expect{\mathbb E}
\def\paraboloid{{\mathbb P}}
\newcommand{\Pp}[1]{\mathbb P^{#1}}
\def\Sbest{{\mathbf S}}
\def\Pbest{{\mathbf P}}

\newcommand{\Norm}[1]{ \left\|  #1 \right\| }
\newcommand{\set}[1]{ \left\{ #1 \right\} }
\def\one{\mathbf 1}
\newcommand{\modulo}[2]{[#1]_{#2}}

\def\scriptf{{\mathcal F}}
\def\scriptg{{\mathcal G}}
\def\scriptm{{\mathcal M}}
\def\scriptb{{\mathcal B}}
\def\scriptc{{\mathcal C}}
\def\scriptt{{\mathcal T}}
\def\scripti{{\mathcal I}}
\def\scripte{{\mathcal E}}
\def\scriptv{{\mathcal V}}
\def\scriptS{{\mathcal S}}
\def\scripta{{\mathcal A}}
\def\scriptr{{\mathcal R}}
\def\scripto{{\mathcal O}}
\def\scripth{{\mathcal H}}
\def\scriptd{{\mathcal D}}
\def\scriptl{{\mathcal L}}
\def\scriptn{{\mathcal N}}
\def\frakv{{\mathfrak V}}

\renewcommand{\leq}{\leqslant}
\renewcommand{\geq}{\geqslant}
\newcommand{\Cc}{\mathbb{C}}
\newcommand{\R}{\mathbbm{R}} 
\providecommand{\ab}[1]{\vert #1\vert}   
\providecommand{\norma}[1]{\Vert #1 \Vert}

\author{Michael Christ}
\address{
        Michael Christ\\
        Department of Mathematics\\
        University of California \\
        Berkeley, CA 94720-3840, USA}
\email{mchrist@math.berkeley.edu}

\author{Ren\'e Quilodr\'an}
\address{
        Ren\'e Quilodr\'an\\
        Department of Mathematics\\
        University of California \\
        Berkeley, CA 94720-3840, USA}
\email{rquilodr@math.berkeley.edu}
\thanks{The authors were supported in part by NSF grant DMS-0901569.}

\date{June 2, 2010.}

\title[Gaussians Rarely Extremize]
{Gaussians Rarely Extremize Adjoint Fourier Restriction Inequalities For Paraboloids}


\maketitle


\section{Introduction}

Let $\Pp{d-1}$ be the paraboloid in $\reals^d$,
\[
\Pp{d-1}=\{(y',y_d)\in\reals^{d-1}\times\reals: y_d = |y'|^2/2\}.
\]
Equip $\Pp{d-1}$ with the appropriately dilation-invariant measure $\sigma$ on $\reals^d$ defined by 
\[\int_{\reals^d} f(y',y_d)\,d\sigma(y',y_d) = \int_{\reals^{d-1}} f(y',|y'|^2/2)\,dy',\]
where $dy'$ denotes Lebesgue measure on $\reals^{d-1}$.

The adjoint Fourier restriction inequality
states that for a certain range of exponents $p$,
\begin{equation} \label{eq:TS}
\norm{\widehat{f\sigma}}_q\le C\norm{f}_{L^p(\Pp{d-1},\sigma)}
\end{equation}
for some finite constant $C=C(p,d)$,
where 
$q=q(p)$ is specified by
\begin{equation} \label{qdef}
q^{-1} = 
\frac{d-1}{d+1}(1-p^{-1}).
\end{equation}
This inequality is known to be valid for $1\le p\le p_0$
for a certain exponent $p_0>2$ depending on $d$,
and is conjectured to be valid for all $p\in [1,\frac{2d}{d-1})$.

The case $p=2$ is of special interest, since it gives a space-time upper bound
for the solution of a linear Schr\"odinger equation
with arbitrary initial data in the natural class $L^2(\reals^{d-1})$.
While the cases $p\ne 2$ also give such bounds, they are expressed in terms
of less natural norms on initial data.

The more general Strichartz inequalities \eqref{strichartz} are phrased in terms of mixed norms.
For simplicity we restrict our discussion of mixed norm inequalities to the case $p=2$.
For $\reals^d$, adopt coordinates $(x,t)\in \reals^{d-1}\times\reals$.
For $r,q\in[1,\infty)$, for $u:\reals^d_{x,t}\to\complex$,
define $\norm{u}_{L^r_tL^q_x}
= (\int (\int |u(x,t)|^q\,dx)^{r/q}\,dt)^{1/r}$.
The Strichartz inequalities state  \cite{tao} 
that
\begin{equation}  \label{strichartz}
\norm{\widehat{f\sigma}}_{L^r_tL^q_x}
\le C\norm{f}_{L^2(\Pp{d-1},\sigma)}
\end{equation}
for all $r,q,d$ satisfying
$q,r\ge 2$ and
\begin{equation}
\label{strichartzexponents}
\frac2{r}+\frac{d-1}{q} = \frac{d-1}2
\end{equation}
with the endpoint $q=\infty$ excluded for $d=3$. 

By a radial Gaussian we mean a function $f:\Pp{d-1}\to\complex$
of the form $f(y,|y|^2/2) = c\exp(-z|y-y_0|^2+y\cdot v)$ for $y\in\reals^{d-1}$,
where $0\ne c\in\complex$, $y_0\in\reals^{d-1}$, and $v\in\complex^{d-1}$ are arbitrary,
and $z\in\complex$ has positive real part.
Radial Gaussians on $\Pp{d-1}$ are simply restrictions to $\Pp{d-1}$ of  
functions $F(x)=e^{x\cdot w+c}$ where $w=(w',w_d)\in\complex^d$ satisfies $\Re(w_d)<0$.

Radial Gaussians extremize \cite{foschi} 
inequality \eqref{eq:TS} for $p=2$ in the two lowest-dimensional cases, $d=2$ and $d=3$.
More than one proof of these facts is known.
It is natural to ask whether these are isolated facts, or whether Gaussians 
appear as extremizers more generally.
Additional motivation is provided by recent work of
Christ and Shao \cite{christshao1},\cite{christshao2}, who have shown the existence of
extremizers for the corresponding inequalities for the spheres $S^1$ and $S^2$.
Their analysis relies on specific information
about extremizers for the paraboloid, which can be read off from
explicit calculations for Gaussians, but which has not been shown to
follow more directly from the inequality itself.
If Gaussians were known to be extremizers for $\Pp{d-1}$,
it should then be possible to establish the existence of extremizers for $S^{d-1}$.

In this paper, we discuss a related question: Are
radial Gaussians critical points for the nonlinear functionals associated to 
inequalities \eqref{eq:TS} and \eqref{strichartz}?
These functionals are defined as follows.
\begin{equation}
\Phi(f)=\Phi_{p,d}(f) = \frac{\norm{\widehat{f\sigma}}_q^q}{\norm{f}_p^q}, 
\end{equation}
where $q=q(p,d)$ is defined by \eqref{qdef}
and
\begin{equation}
\Psi(f) = \Psi_{q,r,d}(f)
= \frac{\norm{\widehat{f\sigma}}_{L^r_tL^q_x}^r}{\norm{f}_2^r}.
\end{equation}
$\Phi$ is defined for all $0\ne f\in L^p(\Pp{d-1},\sigma)$,
while
$\Psi$ is defined for all $0\ne f\in L^2(\Pp{d-1},\sigma)$.
\eqref{eq:TS}  and \eqref{strichartz} guarantee that $\Phi_{p,d},\Psi_{q,r,d}$ 
are bounded functionals, for the ranges of parameters indicated.

By a critical point of $\Phi$ is of course meant a function $0\ne f\in L^p(\Pp{d-1})$
such that for any
$g\in L^p(\Pp{d-1})$,
\begin{equation} \label{defn:criticalpoint}
\Phi(f+\eps g)
= \Phi(f)
+o(|\eps|)
\text{ as } \eps\to 0.
\end{equation}
Here $\eps\in\complex$.
For  $d\ge 3$,
there is a range of exponents for which $\Phi_{p,d}$ is conjectured to be
bounded but for which this is not known \cite{hotstrichartz},\cite{HZ}.
But $\Phi_{p,d}(f)$ is well-defined and finite for any Schwartz function,
so we may still ask whether \eqref{defn:criticalpoint}
holds whenever $f$ is a radial Gaussian
and $g$ is an arbitrary Schwartz function.  This gives a definition of
critical point which is equivalent whenever the functional is bounded.

It is a simple consequence of symmetries of these functionals that for fixed $p,q,r,d$,
one radial Gaussian is a critical point if and only if all are critical points.
Our main result is as follows.
\begin{theorem}
Let $d\ge 2$, let $1<p<2d/(d-1)$, and set $q=q(p,d)$.
Radial Gaussians are critical points for the $L^p\to L^q$ adjoint
Fourier restriction  inequalities if and only if $p=2$.
Radial Gaussians are critical points for the $L^2\to L^r_t{}L^q_x$ 
Strichartz inequalities for all admissible pairs $(r,q)\in (1,\infty)^2$.
\end{theorem}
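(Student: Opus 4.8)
The plan is to convert the criticality property into an explicit Euler--Lagrange identity, evaluate both sides on one radial Gaussian, and recognize the resulting function of $|y|$ as an elementary contour integral whose form is controlled by $p,q,r,d$. Expanding $\norm{\widehat{(f+\eps g)\sigma}}_q^q$ and $\norm{f+\eps g}_p^p$ to first order in $\eps\in\complex$ and applying Parseval, one sees that $f$ is a critical point of $\Phi_{p,d}$ precisely when
\[
\widehat{\,|\widehat{f\sigma}|^{q-2}\overline{\widehat{f\sigma}}\,}\,\big|_{\Pp{d-1}} \;=\; c\,|f|^{p-2}\bar f \quad\text{on }\Pp{d-1},\qquad c=\norm{\widehat{f\sigma}}_q^q/\norm{f}_p^p>0,
\]
the hat denoting the full $d$-dimensional Fourier transform; for $\Psi_{q,r,d}$ the identity is the same with $|f|^{p-2}\bar f$ replaced by $\bar f$ and with the weight $W(t)=\norm{\widehat{f\sigma}(\cdot,t)}_{L^q_x}^{\,r-q}$ inserted inside the hat on the left. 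By the symmetry remark preceding the theorem it suffices to test these on the standard Gaussian $f_0$, $f_0(y,|y|^2/2)=e^{-|y|^2/2}$.

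\emph{Reduction to a contour integral.} A Gaussian integral gives $\widehat{f_0\sigma}(x,t)=c_d(1+it)^{-(d-1)/2}e^{-|x|^2/(2(1+it))}$, so $|\widehat{f_0\sigma}|^{q-2}\overline{\widehat{f_0\sigma}}$ is, for each fixed $t$, a Gaussian in $x$. Performing the $x$-Fourier transform, restricting to $\Pp{d-1}$ (which multiplies by $e^{-it|\xi|^2/2}$ and integrates out $t$), and using rotational symmetry to write the left side as a function $\Theta(u)$ of $u=|\xi|^2$, one obtains $\Theta(u)=\int_{\reals}\mu(t)\,e^{-u\psi(t)/2}\,dt$ with $\psi(t)=\tfrac{1+imt}{m+it}$, $m=q-1$, and $\mu(t)=\mathrm{const}\cdot(1-it)^{a_1}(1+it)^{a_2}(m+it)^{a_3}$ (times $W(t)\propto(1+t^2)^\gamma$ in the Strichartz case), the exponents explicit with $a_1+a_2+a_3=-\tfrac12(d-1)(q-2)$, which is $<-1$ throughout the range of $p$ and so makes the integral converge. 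Since $\psi$ is a M\"obius map carrying $\reals\cup\{\infty\}$ onto the circle $C$ with real diameter $[1/m,m]$, and since $1\mp it$, $m+it$, $1+t^2$ and $dt$ each become rational in $w$ and $m-w$ under the substitution $w=\psi(t)$, one is left with
\[
\Theta(u)\;=\;c\oint_C (1-w)^{\alpha_1}(1+w)^{\alpha_2}(m-w)^{\alpha_3}\,e^{-uw/2}\,dw
\]
for explicit exponents $\alpha_i$. The criticality conditions now read $\Theta(u)=c'e^{-(p-1)u/2}$ (for $\Phi$) and $\Theta(u)=c'e^{-u/2}$ (for $\Psi$).

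\emph{The affirmative cases.} When $p=2$ one computes $\alpha_1=-1$ and $\alpha_3=0$, so the integrand is meromorphic inside $C$ with a single simple pole, at $w=1$ (note $1/m<1<m$, while $w=-1$ and $w=\infty$ lie outside the disk, so $(1+w)^{\alpha_2}$ is holomorphic there). The residue theorem gives $\Theta(u)=\mathrm{const}\cdot e^{-u/2}$, which is $c'e^{-(p-1)u/2}$ because $p-1=1$. For the Strichartz functionals, inserting $W(t)$ shifts $\alpha_1$ and $\alpha_3$ by $\gamma$ and $-2\gamma$ respectively, and the admissibility relation $\tfrac2r+\tfrac{d-1}q=\tfrac{d-1}2$ is \emph{exactly} what again forces $\alpha_1=-1$ and $\alpha_3=0$; the same residue computation gives $\Theta(u)=\mathrm{const}\cdot e^{-u/2}$, i.e.\ $c'\bar f_0$, for every admissible $(r,q)$. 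Positivity of the constant follows by pairing the Euler--Lagrange identity with $g=f_0$, which returns $\norm{\widehat{f_0\sigma}}_q^q$, resp.\ $\norm{\widehat{f_0\sigma}}_{L^r_tL^q_x}^r$.

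\emph{The remaining case, and the main obstacle.} For $\Phi_{p,d}$ with $p\ne 2$ one has $\alpha_3=\tfrac12(d-1)(q-2)-2\ne 0$, so the integrand has a genuine branch point at $w=m$, which lies \emph{on} $C$; moreover $\alpha_1=\tfrac14(d-1)(2-q)<-\tfrac12$, equal to $-1$ only when $p=2$. Deforming $C$ around the cut joining $w=1$ to $w=m$, the endpoint $w=1$ contributes --- by Watson's lemma, or by a residue of order $\ge 2$ when $\alpha_1\in\integers$ --- a term $\bigl(\text{a nonconstant algebraic or polynomial function of }u\bigr)\,e^{-u/2}$, while the branch point $w=m$ contributes a term decaying at the strictly faster rate $e^{-mu/2}$; as $u\to\infty$, neither is compatible with $c'e^{-(p-1)u/2}$ unless both $(p-1)/2=\tfrac12$ and that algebraic factor is a constant, i.e.\ unless $p=2$. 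I expect this large-$u$ analysis of $\Theta$ when $p\ne 2$ --- in particular, cleanly handling the branch point that sits on the contour of integration --- to be the delicate step; the rest is bookkeeping once one notices the collapse of the exponents $\alpha_i$ that makes the affirmative cases work.
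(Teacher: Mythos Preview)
Your proposal is correct. The Euler--Lagrange reduction, the computation of $\widehat{f_0\sigma}$ and of the $x$--integral, and the affirmative cases ($p=2$ and the mixed-norm Strichartz inequalities) coincide with the paper's argument: your M\"obius substitution $w=\psi(t)$ is just a change of variable in the paper's $t$--integral \eqref{key}, and the residue you compute at $w=1$ is the residue the paper computes at $t=i$. Your observation that the admissibility relation \eqref{strichartzexponents} is exactly what forces $\alpha_1=-1$, $\alpha_3=0$ is the content of the paper's simplification of $J(a)$ in \eqref{Jintegral}.

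For $p\ne 2$ the two arguments genuinely differ. The paper expands $e^{a}I(a)$ (for $p<2$) or $e^{(p-1)a}I(a)$ (for $p>2$) as a power series in $a$, rewrites each Taylor coefficient as an integral over the cut $\{t=i+iy:y>0\}$ via Lemma~\ref{integration}, and shows the resulting sequence cannot match that of an exponential: the coefficients alternate in sign when $(d-1)(q-2)/4\notin\integers$, while in the exceptional integer case they vanish for all large $k$ but $I_{k_0-1}\ne 0$. You instead read off the large-$u$ asymptotics of $\Theta(u)$ from the endpoint $w=1$ of the cut $[1,m]$. Both approaches succeed; the paper's is more elementary (no asymptotic expansions, only signs of explicit integrals), while yours is more structural and makes transparent why $p=2$ is the unique value at which the exponents $(\alpha_1,\alpha_3)$ collapse. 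The step you flag as delicate---the case $\alpha_1<-1$ (i.e.\ $p<2$), where the singularity at $w=1$ is not integrable---is real but standard: one uses the Hankel-loop version of Watson's lemma (equivalently, analytic continuation of $\sin(\pi\alpha_1)\Gamma(\alpha_1+1)$ in $\alpha_1$), and the leading constant is nonzero because $\alpha_1\notin\{0,1,2,\dots\}$ and $2^{\alpha_2}(m-1)^{\alpha_3}\ne 0$.
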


For spheres $S^{d-1}$, the situation is different;
constant functions are critical points for the analogues of both functionals.

\section{Euler-Lagrange Equations}
We will
show that extremizers must satisfy a certain Euler-Lagrange equation,
then check by explicit calculation whether radial Gaussians satisfy this equation.
In this section we formulate and justify the Euler-Lagrange equations.

Let $g^\vee$ denote the inverse Fourier transform of $g$.
\begin{proposition} \label{prop:unmixedEL}
Let $d\ge 2$, let $1<p<2d/(d-1)$, and set $q=q(p,d)$.
A complex-valued function $f\in L^p(\Pp{d-1})$ with nonzero $L^p(\Pp{d-1})$ norm
is a critical point of $\Phi_{p,d}$ if and only if there exists $\lambda>0$
such that $f$ satisfies the equation 
\begin{equation} \label{EL}
\Big(|\widehat{f\sigma}|^{q-2}\widehat{f\sigma}\Big)^\vee\Big|_{\Pp{d-1}}
= \lambda |f|^{p-2}f \text{ almost everywhere on }\Pp{d-1}.
\end{equation}


For $S^{d-1}$, the same equation likewise characterizes critical points,
except of course that the restriction on the left-hand side is
to $S^{d-1}$, and $q$ can take on any value in $[q(p,d),\infty)$.
\end{proposition}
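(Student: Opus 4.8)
The plan is a first-variation (Lagrange multiplier) computation, made rigorous by quantitative Taylor-type bounds on the nonlinearities $z\mapsto|z|^{q}$ and $z\mapsto|z|^{p}$. Write $F=\widehat{f\sigma}$ and $G=\widehat{g\sigma}$, so that $\widehat{(f+\eps g)\sigma}=F+\eps G$ by linearity. The point of departure is the pair of pointwise expansions, valid for all $\eps\in\complex$,
\[
|F+\eps G|^{q}=|F|^{q}+q\,|F|^{q-2}\Re(\eps\,\overline F\,G)+E_{q},
\qquad
|f+\eps g|^{p}=|f|^{p}+p\,|f|^{p-2}\Re(\eps\,\overline f\,g)+E_{p},
\]
with $|E_{q}|\le C(|F|^{q-2}|\eps G|^{2}+|\eps G|^{q})$ and, in the delicate case $p<2$, $|E_{p}|\le C\min(|f|^{p-2}|\eps g|^{2},\,|\eps g|^{p})$ (for $p\ge 2$ the cruder bound $C(|f|^{p-2}|\eps g|^{2}+|\eps g|^{p})$ suffices). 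Since $q>2$ throughout the admissible range, Hölder's inequality with exponents $q/(q-2)$ and $q/2$ gives $\int_{\reals^{d}}|E_{q}|=O(|\eps|^{2})$, using $F,G\in L^{q}$; and $\int_{\Pp{d-1}}|E_{p}|\,d\sigma=O(|\eps|^{\min(2,p)})=o(|\eps|)$, using $f,g\in L^{p}(\Pp{d-1})$. (Here $\widehat{f\sigma}\in L^{q}$ for every $f\in L^{p}$ when \eqref{eq:TS} is available, and unconditionally for a radial Gaussian by explicit computation, and similarly for $\widehat{g\sigma}$ with $g$ Schwartz.) Hence $\eps\mapsto\norm{\widehat{(f+\eps g)\sigma}}_{q}^{q}$ and $\eps\mapsto\norm{f+\eps g}_{p}^{p}$ are differentiable at $0$ with first-order terms $q\,\Re\big(\eps\int_{\reals^{d}}|F|^{q-2}\overline F\,G\big)$ and $p\,\Re\big(\eps\int_{\Pp{d-1}}|f|^{p-2}\overline f\,g\,d\sigma\big)$, and raising the latter to the power $q/p$ multiplies it by $\norm f_{p}^{q-p}$.

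Next I would transfer the frequency integral to the physical side. The function $h:=|F|^{q-2}F=|\widehat{f\sigma}|^{q-2}\widehat{f\sigma}$ lies in $L^{q'}(\reals^{d})$, since $|h|^{q'}=|F|^{q}$, and the multiplication formula gives $\int_{\reals^{d}}\overline h\,\widehat{g\sigma}=\int_{\Pp{d-1}}\overline{h^{\vee}}\,g\,d\sigma$, the restriction $h^{\vee}|_{\Pp{d-1}}$ being a legitimate element of $L^{p'}(\sigma)$ by the restriction estimate dual to \eqref{eq:TS} in the range where the latter is known, and by direct computation for radial Gaussians in any case. Writing $\scriptr(f):=\big(|\widehat{f\sigma}|^{q-2}\widehat{f\sigma}\big)^{\vee}\big|_{\Pp{d-1}}$, the quotient rule now yields for the first-order term of $\Phi(f+\eps g)$ the expression
\[
\frac{q}{\norm f_{p}^{q}}\,\Re\!\left(\eps\int_{\Pp{d-1}}\Big(\overline{\scriptr(f)}-\lambda\,\overline{|f|^{p-2}f}\Big)\,g\,d\sigma\right),
\qquad
\lambda:=\frac{\norm{\widehat{f\sigma}}_{q}^{q}}{\norm f_{p}^{p}}>0 ,
\]
where $\lambda>0$ because $f\ne 0$ forces $\widehat{f\sigma}\ne 0$. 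Since $\eps$ ranges over all of $\complex$, this is $o(|\eps|)$ for a fixed $g$ precisely when the inner integral vanishes; requiring it for every admissible $g$ (Schwartz $g$ already suffice, their restrictions to $\Pp{d-1}$ separating $L^{1}_{\rm loc}(\sigma)$) gives $\overline{\scriptr(f)}=\lambda\,\overline{|f|^{p-2}f}$ $\sigma$-almost everywhere, i.e. \eqref{EL}. Conversely, if \eqref{EL} holds for some $\lambda>0$, then pairing both sides with $\overline f$ and applying Parseval forces $\lambda=\norm{\widehat{f\sigma}}_{q}^{q}/\norm f_{p}^{p}$, whereupon the displayed first-order term vanishes identically in $g$, so $f$ is a critical point. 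For $S^{d-1}$ the argument is identical with $\sigma$ the surface measure on the sphere; the scaling relation \eqref{qdef} is never invoked, which is exactly why $q$ may be any exponent for which the corresponding inequality is posed.

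The main obstacle is the rigorous differentiation under the integral sign when $p<2$: there $|f|^{p-2}f$ is unbounded near $\{f=0\}$ and the expansion of $|f+\eps g|^{p}$ is not uniformly $o(|\eps|)$, so one cannot simply differentiate term by term. This is handled by the min-type bound on $E_{p}$ above, which interpolates between the quadratic behavior on $\{|f|\ge|\eps g|\}$ and the $|\eps g|^{p}$ behavior on $\{|f|<|\eps g|\}$; once this is in hand, dominated convergence applies after a routine Hölder estimate showing the majorants are integrable. A secondary, more conceptual point is that $\scriptr(f)$ — the inverse Fourier transform of an $L^{q'}$ function with $q'<2$, then restricted to a set of measure zero — is a priori a delicate object; it is legitimized by duality in the range where \eqref{eq:TS} is known, and in all cases (in particular for the radial Gaussians to which the Proposition will be applied) by the explicit formula for $\widehat{f\sigma}$. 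Everything else is bookkeeping with the quotient rule.
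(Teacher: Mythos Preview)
Your proof is correct and follows essentially the same first-variation strategy as the paper. The paper packages the computation as a single mixed-norm expansion lemma (its Lemma~\ref{lemma:noop}), proved by splitting into regions according to whether $|zG|\le |z|^\eps|F|$; this is the same device as your min-type remainder bound for the sub-quadratic exponent, so the arguments coincide in substance though not in packaging.
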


$\lambda$ is determined by $\norm{f}_p$ and $\Phi(f)$;
multiply both sides of \eqref{EL} by $\bar f$ and integrate with respect to $\sigma$.

Both exponents $q-1,p-1$ are strictly positive, and $q-2>0$.
Moreover, since $f\in L^p$,
$\widehat{f\sigma}\in L^q$, and therefore
$|\widehat{f\sigma}|^{q-2}\widehat{f\sigma}\in L^{q/(q-1)}(\reals^d)$.
Therefore by the Fourier restriction inequality,
the restriction to $\Pp{d-1}$ of
$\Big(|\widehat{f\sigma}|^{q-2}\widehat{f\sigma}\Big)^\vee$
is a well-defined element of $L^{p/(p-1)}(\Pp{d-1},\sigma)$.
Thus the left-hand side of \eqref{EL} is well-defined for any $f\in L^p(\Pp{d-1},\sigma)$.

\begin{proposition} \label{prop:mixedEL}
Let $(q,r,d)$ satisfy the necessary and sufficient conditions \eqref{strichartzexponents}
for the Strichartz inequalities.
A nonzero complex-valued function $f\in L^2(\Pp{d-1})$ 
is a critical point of $\Psi_{p,d}$ if and only if there exists $\lambda>0$
such that $f$ satisfies the equation 
\begin{equation} \label{ELmixedwithFT}
\Big( \widehat{f\sigma}(x,t) |\widehat{f\sigma}(x,t)|^{q-2} \norm{\widehat{f\sigma}(\cdot,t)}_{L^q_x}^{r-q} \Big)^\vee
= \lambda f \text{ a.e.\ on } \Pp{d-1}.  
\end{equation}
\end{proposition}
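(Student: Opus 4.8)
The plan is to derive the Euler--Lagrange equation for $\Psi_{q,r,d}$ by the same variational argument that establishes Proposition \ref{prop:unmixedEL}, keeping careful track of how the mixed norm $L^r_tL^q_x$ differentiates. Write $u = \widehat{f\sigma}$ and, for a Schwartz perturbation $g$, set $u_\eps = \widehat{(f+\eps g)\sigma} = u + \eps\, \widehat{g\sigma}$. Because $\eps\in\complex$, criticality of $\Psi$ at $f$ means that the real-linear part of $\eps\mapsto \Psi(f+\eps g)$ vanishes, equivalently that the derivative in $\eps$ and in $\bar\eps$ both vanish; by replacing $g$ by $ig$ it suffices to track the holomorphic part. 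I would first expand the denominator: $\norm{f+\eps g}_2^r = \norm{f}_2^r + r\,\norm{f}_2^{r-2}\,\Re\langle \eps g, f\rangle_{L^2(\sigma)} + o(|\eps|)$. The numerator is the main computation.

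The key step is to differentiate $N(\eps) := \norm{u_\eps}_{L^r_tL^q_x}^r = \int_\reals \big(\int_{\reals^{d-1}} |u_\eps(x,t)|^q\,dx\big)^{r/q}\,dt$. Differentiating under the integral sign (justified because $g$ is Schwartz, so $\widehat{g\sigma}$ is bounded with all derivatives, and $u\in L^r_tL^q_x$ by Strichartz), the inner integral $A(t) := \int |u_\eps(x,t)|^q\,dx$ has derivative contribution $q\,\Re\!\int \overline{u(x,t)}\,|u(x,t)|^{q-2}\,\eps\,\widehat{g\sigma}(x,t)\,dx$, using $\partial_\eps |w + \eps h|^q = q|w+\eps h|^{q-2}\Re(\overline{(w+\eps h)}\eps h)$ at $\eps = 0$; and then the outer power contributes a factor $\frac{r}{q}A(t)^{r/q - 1} = \frac{r}{q}\norm{u(\cdot,t)}_{L^q_x}^{r-q}$. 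Collecting, the first variation of the numerator is
\begin{equation*}
\frac{d}{d\eps}\Big|_{\eps=0^+} N(\eps) \text{ has real part } \
r\,\Re \int_{\reals^d} \overline{u(x,t)}\,|u(x,t)|^{q-2}\,\norm{u(\cdot,t)}_{L^q_x}^{r-q}\,\widehat{g\sigma}(x,t)\,dx\,dt.
\end{equation*}
Denote by $v(x,t) = u(x,t)\,|u(x,t)|^{q-2}\,\norm{u(\cdot,t)}_{L^q_x}^{r-q}$ the bracketed expression in \eqref{ELmixedwithFT}, so this is $r\,\Re\int \bar v\,\widehat{g\sigma}$. By Parseval (and the duality pairing that makes $v^\vee\big|_{\Pp{d-1}}$ meaningful — $v$ lies in the dual space $L^{r'}_tL^{q'}_x$, which one checks from Hölder in $x$ and $t$ using the Strichartz admissibility relation, so the Strichartz inequality applied in adjoint form gives $v^\vee|_{\Pp{d-1}}\in L^2(\sigma)$), this equals $r\,\Re\langle \overline{v^\vee|_{\Pp{d-1}}},\,\bar g\rangle = r\,\Re\int_{\Pp{d-1}} \overline{(v^\vee)}\, g\,d\sigma$, up to complex conjugation conventions.

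Putting the quotient rule together, $\Psi(f+\eps g) = \Psi(f) + o(|\eps|)$ for all Schwartz $g$ (and hence, by density and boundedness of the functional, for all $g\in L^2$) is equivalent to the vanishing of
\begin{equation*}
\Re\int_{\Pp{d-1}} \Big( \big(v^\vee\big)\big|_{\Pp{d-1}} - \lambda f\Big)\,\bar g\,d\sigma = 0 \quad\text{for all } g,
\end{equation*}
where $\lambda = \norm{f}_2^{-r}\cdot(\text{numerator})/(\text{appropriate power}) = \Psi(f)\,\norm{f}_2^{-2}\cdot(\text{const})$; tracking constants shows $\lambda>0$. Running the same test with $ig$ in place of $g$ removes the $\Re$, and since $g$ is arbitrary the integrand vanishes a.e., which is exactly \eqref{ELmixedwithFT}. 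The value of $\lambda$ is then pinned down, as in the unmixed case, by pairing \eqref{ELmixedwithFT} against $\bar f$ and integrating $d\sigma$: the left side becomes $\int |u|^q \norm{u(\cdot,t)}^{r-q}_{L^q_x}\,dx\,dt = \norm{u}_{L^r_tL^q_x}^r$ and the right side is $\lambda\norm{f}_2^2$, giving $\lambda = \Psi(f)\,\norm{f}_2^{r-2} > 0$.

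The main obstacle is the functional-analytic bookkeeping needed to justify the formal computation, rather than the computation itself: one must (i) verify that $v = u|u|^{q-2}\norm{u(\cdot,t)}_{L^q_x}^{r-q} \in L^{r'}_tL^{q'}_x$ so that its inverse Fourier transform restricts to a bona fide element of $L^2(\Pp{d-1},\sigma)$ via the Strichartz inequality in its dual formulation — this uses $\tfrac1{q'} = 1-\tfrac1q$ and $\tfrac1{r'}=1-\tfrac1r$ together with \eqref{strichartzexponents}; and (ii) justify differentiation under the integral sign and the interchange of $\partial_\eps$ with both the $x$- and $t$-integrations, for which the Schwartz hypothesis on $g$ plus uniform (in small $\eps$) domination suffices. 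Since $q\ge 2$ we have $q - 2 \ge 0$, so $w\mapsto |w|^{q-2}w$ is $C^1$ and there is no singularity issue at zeros of $u$; the excluded endpoint $q=\infty$ for $d=3$ is irrelevant because the hypothesis restricts to $(r,q)\in(1,\infty)^2$. Once these points are settled, the identification of the first variation with the claimed pairing is immediate, and the equivalence in both directions (critical point $\Rightarrow$ equation, and equation $\Rightarrow$ critical point, the latter by reading the computation backwards) follows.
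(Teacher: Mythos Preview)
Your approach is correct and yields the proposition, but it differs from the paper's route. The paper isolates the first-variation computation into a standalone Lemma~\ref{lemma:noop}, which asserts that for \emph{any} $q,r\in(1,\infty)$ and any $F,G\in L^r_tL^q_x$ one has
\[
\norm{F+zG}_{L^r_tL^q_x}^r = \norm{F}_{L^r_tL^q_x}^r + r\iint \norm{F_t}^{r-q}|F|^q\Re(zG/F)\,dx\,dt + O(|z|^\gamma)
\]
for some $\gamma>1$. This is proved not by differentiating under the integral sign but by splitting $\reals^{d-1}$ and $\reals$ into the sets where $|zG|\le |z|^\eps|F|$ (respectively $|z|\,\norm{G_t}\le|z|^\eps\norm{F_t}$) and their complements, Taylor-expanding on the good sets and bounding the bad sets crudely. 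That lemma then yields the Euler--Lagrange equation for any bounded $T:L^p\to L^r_tL^q_x$ in one stroke, covering Propositions~\ref{prop:unmixedEL} and~\ref{prop:mixedEL} simultaneously.

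Your direct differentiation is more elementary and is adequate here because the Strichartz hypotheses force $q,r\ge 2$, so $w\mapsto|w|^{q-2}w$ is $C^1$ and dominated convergence goes through; note, though, that the restriction to Schwartz $g$ is unnecessary --- Strichartz already places $\widehat{g\sigma}\in L^r_tL^q_x$ for every $g\in L^2$, and that is all the domination argument needs --- and dropping it spares you the density step in the converse direction. What the paper's decomposition argument buys is generality: it needs no smoothness of $w\mapsto|w|^{q-2}w$ and therefore also handles the $L^p$ denominator in Proposition~\ref{prop:unmixedEL} when $p<2$, where a naive differentiation-under-the-integral argument would run into a genuine singularity.
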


The Euler-Lagrange equation for $S^{d-1}$ takes the corresponding form.
It follows at once that constant functions are critical points for $S^{d-1}$, 
because $\widehat{\sigma}|\widehat{\sigma}|^{q-2}$ is a radial function,
the inverse Fourier transform of any radial function is radial,
and the restriction of any radial function to $S^{d-1}$ is constant.

Propositions~\ref{prop:unmixedEL} and \ref{prop:mixedEL}
will follow from the following elementary fact.
\begin{lemma} \label{lemma:noop}
For any exponents $q,r\in (1,\infty)$, 
there exists $\gamma>1$ with the following property.
Let $F,G\in L^r_tL^q_x$ of some measure space(s),
and assume that $\norm{F}_{L^r_tL^q_x}\ne 0$.
Let $z\in\complex$ be a small parameter.
Then
\begin{equation}
\norm{F+zG}_{L^r_tL^q_x}^r
= 
\norm{F}_{L^r_tL^q_x}^r
+ r
\iint
\norm{F_t}^{r-q}
|F(x,t)|^q\Re(zG(x,t)/F(x,t))
\,dx\,dt
+O\big(|z|^\gamma\big)
\end{equation}
as $z\to 0$.
\end{lemma}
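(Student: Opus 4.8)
The plan is to reduce the statement to a pointwise Taylor expansion of $w\mapsto |w|^q$ on $\complex$, integrated first in $x$ and then in $t$. First I would record the elementary fact that for $q\in(1,\infty)$ there is $\gamma=\min(q,2)>1$ so that for all $a\in\complex$ and all $b\in\complex$,
\begin{equation*}
\big||a+b|^q - |a|^q - q|a|^{q-2}\Re(\bar a b)\big| \le C_q\big(|a|^{q-2}|b|^2\one_{|b|\le|a|} + |b|^q\one_{|b|>|a|}\big) \le C_q'\,|b|^{\gamma}\,(|a|+|b|)^{q-\gamma},
\end{equation*}
with the convention that the middle term with $|a|^{q-2}$ is interpreted as $0$ when $a=0$ (legitimate since it is dominated by $|b|^q$ there). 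This is proved by writing $|a+b|^q=\phi(a+b)$ with $\phi(w)=|w|^q$, noting $\phi\in C^1(\complex)$ with $\nabla\phi(w)=q|w|^{q-2}w$ (identifying $\complex\cong\reals^2$), applying the mean value / fundamental theorem of calculus to $s\mapsto \phi(a+sb)$, and then estimating the modulus of continuity of $\nabla\phi$, which is Hölder of exponent $q-1$ when $q\le 2$ and locally Lipschitz with derivative growth $|w|^{q-2}$ when $q\ge 2$.

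Next I would apply this pointwise with $a=F(x,t)$ and $b=zG(x,t)$ and integrate in $x$. Writing $\|H_t\|$ for $\|H(\cdot,t)\|_{L^q_x}$, this gives
\begin{equation*}
\|F_t+zG_t\|^q_{L^q_x} = \|F_t\|^q + q\,\Re\!\int |F(x,t)|^{q-2}\bar F(x,t)\,zG(x,t)\,dx + R_1(t,z),
\end{equation*}
where, by the displayed pointwise bound together with Hölder's inequality in $x$ with exponents $q/\gamma$ and $q/(q-\gamma)$ (when $\gamma<q$; trivial when $\gamma=q$), $|R_1(t,z)|\le C|z|^\gamma\|G_t\|_{L^q_x}^\gamma\big(\|F_t\|_{L^q_x}+\|G_t\|_{L^q_x}\big)^{q-\gamma}$. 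Then I raise to the power $r/q$ and expand again, this time applying the \emph{same} elementary inequality but with exponent $r/q$ in place of $q$: setting $A(t)=\|F_t\|^q$ and $B(t,z)$ equal to the linear term plus $R_1$, we have $(A+B)^{r/q}=A^{r/q}+\tfrac{r}{q}A^{r/q-1}B+O\big(|B|^{\min(r/q,2)}(A+|B|)^{r/q-\min(r/q,2)}\big)$, valid since $r/q\in(1,\infty)$ may itself be less than or greater than $2$. The main term $\tfrac{r}{q}A^{r/q-1}\cdot q\,\Re\int|F|^{q-2}\bar F\,zG\,dx = r\,\|F_t\|^{r-q}\,\Re\int |F(x,t)|^q\,(zG(x,t)/F(x,t))\,dx$ is exactly the integrand appearing in the statement (with the convention that the integrand vanishes where $F(x,t)=0$, consistent with $|F|^q \Re(zG/F)=\Re(|F|^{q-2}\bar F zG)$).

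Finally I integrate in $t$ and collect error terms. The contribution of $\tfrac{r}{q}A^{r/q-1}R_1$ and of the higher-order remainder from the second expansion must be shown to be $O(|z|^\gamma)$ for a possibly smaller $\gamma>1$; each such term, after applying Hölder in $t$ against the finite quantities $\|F\|_{L^r_tL^q_x}$ and $\|G\|_{L^r_tL^q_x}$, is bounded by $C|z|^{\gamma'}$ for some $\gamma'>1$ depending only on $q,r$ — concretely one can take $\gamma = 1+\min(1,q-1,r-1,(r/q-1)^+,\dots)$ or simply observe that every error term carries a power $|z|^{\gamma''}$ with $\gamma''>1$ and a coefficient controlled by mixed-norm Hölder, then set $\gamma$ to be the minimum. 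I expect the bookkeeping of the error terms to be the only real obstacle: one must be careful that when $r/q<2$ (which happens, e.g., for the endpoint-type pairs) the second expansion only gives Hölder-type control $|B|^{r/q}$ rather than $|B|^2$, so the linear-in-$z$ piece of $B$ contributes an $O(|z|^{r/q})$ error with $r/q$ still exceeding $1$; and one must check that $\int|F_t|^{q-2}\bar F_t G_t\,dx$ is genuinely integrable in $t$ against $\|F_t\|^{r-q}$, which follows from Hölder in $x$ (giving $\le\|F_t\|^{q-1}_{L^q_x}\|G_t\|_{L^q_x}$) and then Hölder in $t$ with exponents $r/(r-1)$ and $r$. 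None of these steps uses anything about the paraboloid; the lemma is purely about $L^r_tL^q_x$ spaces, so no structure beyond $q,r\in(1,\infty)$ and $\|F\|_{L^r_tL^q_x}\ne 0$ (needed only so that the expansion is about a nonzero base point, though in fact the argument above did not even use that) is invoked.
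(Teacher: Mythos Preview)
Your argument is cleaner than the paper's and is correct when $r\ge q$, but it contains a genuine gap when $r<q$. You assert that the second expansion is ``valid since $r/q\in(1,\infty)$'', yet the lemma assumes only $q,r\in(1,\infty)$ independently, so $r/q$ may well lie in $(0,1)$ (and does, for many admissible Strichartz pairs). For $\alpha=r/q<1$ the pointwise inequality you invoke fails: the map $s\mapsto s^\alpha$ on $[0,\infty)$ has derivative blowing up at $0$, and there is no uniform bound of the form $(A+B)^\alpha-A^\alpha-\alpha A^{\alpha-1}B=O(|B|^{\gamma'}(A+|B|)^{\alpha-\gamma'})$ with $\gamma'>1$. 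Concretely, your error term $\tfrac{r}{q}A^{r/q-1}R_1$ is $\|F_t\|^{r-q}R_1(t,z)$ with a \emph{negative} power of $\|F_t\|$; combined with your bound $|R_1|\le C|z|^{\gamma}\|G_t\|^{\gamma}(\|F_t\|+\|G_t\|)^{q-\gamma}$, the resulting integrand $\|F_t\|^{r-q}\|G_t\|^{\gamma}H_t^{q-\gamma}$ need not be integrable in $t$ (take, e.g., $\|G_t\|\equiv 1$ on $[0,1]$ and $\|F_t\|=t^{1/r}$; the integral diverges once $r<q/2$). So the bookkeeping you flag as ``the only real obstacle'' is in fact an obstruction in this regime.

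The paper's proof handles all $r/q>0$ precisely by avoiding a global pointwise bound at the outer step. It introduces the set $\omega=\{t:\|F_t\|\ne 0,\ |z|\,\|G_t\|\le|z|^{\varepsilon}\|F_t\|\}$; on $\omega$ one divides through by $\|F_t\|^{r}$ and expands $(1+\text{small})^{r/q}$, which is unproblematic for any positive exponent since the base is bounded away from $0$. The complement $\reals\setminus\omega$ is then shown to contribute $O(|z|^{(1-\varepsilon)r})$ directly, because $\|F_t\|\le|z|^{1-\varepsilon}\|G_t\|$ there. Your first (inner, $x$--variable) expansion is essentially the same as the paper's and is fine; to repair your argument for $r<q$ you would need an analogous splitting in $t$ before the second expansion, at which point the two proofs converge.
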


Here $F_t(x)=F(x,t)$ and $\norm{F_t}^q = \int |F(x,t)|^q\,dx$. 
The constant implicit in the remainder term
$O\big(|z|^\gamma\big)$ does depend on the norms of $F,G$.
It is a consequence of H\"older's inequality that the double integral is absolutely convergent.

An immediate consequence is: 
\begin{proposition}
Let $T$ be a bounded linear operator from $L^p$ to
$L^r_tL^q_x$ where $p,q,r\in(1,\infty)$. For $0\ne f\in L^p$
define $\Phi(f)=\norm{Tf}_{L^rL^q}^r/\norm{f}_p^r$.
Then any critical point $f$ of $\Phi$ satisfies the equation
\begin{equation} \label{eq:mixedEL}
T^*\Big( {Tf}(x,t)\,\, |Tf(x,t)|^{q-2} \,\, \norm{Tf(\cdot,t)}_{L^q_x}^{r-q} \Big)
= \lambda |f|^{p-2}\,f
\end{equation}
for some $\lambda\in [0,\infty)$.
\end{proposition}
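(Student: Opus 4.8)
The plan is to apply Lemma~\ref{lemma:noop} twice — once to the numerator and once to the denominator of $\Phi$ — and to read off \eqref{eq:mixedEL} from the vanishing of the first-order term. Fix a nonzero critical point $f$ and an arbitrary $g\in L^p$, let $z\in\complex$ be small, and write $h(x,t)=Tf(x,t)\,|Tf(x,t)|^{q-2}\,\norm{Tf(\cdot,t)}_{L^q_x}^{r-q}$ for the expression appearing in \eqref{eq:mixedEL}. First I would expand the numerator: $Tf$ and $Tg$ lie in $L^r_tL^q_x$ since $T$ is bounded, so (assuming $Tf\neq 0$, the degenerate case being treated at the end) Lemma~\ref{lemma:noop} applied with $F=Tf$, $G=Tg$, together with the identity $\Re(w)a-\Im(w)b=\Re\big(w(a+ib)\big)$ used to repackage the real-linear term, gives
\[
\norm{T(f+zg)}_{L^r_tL^q_x}^r=\norm{Tf}_{L^r_tL^q_x}^r+r\,\Re\!\big(z\,\langle Tg,h\rangle\big)+O(|z|^{\gamma_1}),\qquad \gamma_1>1,
\]
where $\langle u,v\rangle=\int u\,\overline v$ and I have used that $|Tf|^{q-2}$ and $\norm{Tf(\cdot,t)}_{L^q_x}^{r-q}$ are real, so that $\iint\norm{Tf(\cdot,t)}_{L^q_x}^{r-q}|Tf|^{q-2}\,\overline{Tf}\,Tg\,dx\,dt=\langle Tg,h\rangle$. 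By Hölder's inequality, as observed after Lemma~\ref{lemma:noop}, $h$ lies in the dual space $L^{r'}_tL^{q'}_x$, hence $\langle Tg,h\rangle=\langle g,T^*h\rangle$ with $T^*h\in L^{p'}$. Next I would expand the denominator: viewing $L^p$ as the mixed-norm space $L^p_tL^p_x$ over the product of the underlying measure space with a one-point space, the case $q=r=p$ of Lemma~\ref{lemma:noop} yields $\norm{f+zg}_p^p=\norm{f}_p^p+p\,\Re\big(z\,\langle g,|f|^{p-2}f\rangle\big)+O(|z|^{\gamma_2})$ with $\gamma_2>1$, and raising to the power $r/p$ (legitimate since $\norm{f}_p\neq 0$) gives
\[
\norm{f+zg}_p^r=\norm{f}_p^r+r\,\norm{f}_p^{r-p}\,\Re\big(z\,\langle g,|f|^{p-2}f\rangle\big)+O(|z|^{\min(2,\gamma_2)}).
\]

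Dividing the two expansions — permissible because the denominator stays bounded away from $0$ for small $z$ — and dropping the cross terms, which are products of two $O(|z|)$ factors, I obtain
\[
\Phi(f+zg)=\Phi(f)+r\,\Phi(f)\,\Re\!\Big(z\Big(\frac{\langle g,T^*h\rangle}{\norm{Tf}_{L^r_tL^q_x}^r}-\frac{\langle g,|f|^{p-2}f\rangle}{\norm{f}_p^p}\Big)\Big)+O(|z|^{\tilde\gamma}),\qquad \tilde\gamma:=\min(2,\gamma_1,\gamma_2)>1.
\]
Since $f$ is a critical point and $\tilde\gamma>1$, the middle term must be $o(|z|)$ as $z\to0$ for every $g$; as $\Phi(f)>0$ (because $Tf\neq0$) and a real-linear function $z\mapsto\Re(zc)$ on $\complex$ is $o(|z|)$ only when $c=0$, this forces
\[
\frac{1}{\norm{Tf}_{L^r_tL^q_x}^r}\,\langle g,T^*h\rangle=\frac{1}{\norm{f}_p^p}\,\langle g,|f|^{p-2}f\rangle\qquad\text{for all }g\in L^p.
\]
Both sides are the pairing of $g$ against a fixed element of $L^{p'}$, so those elements agree a.e., which is precisely \eqref{eq:mixedEL} with $\lambda=\norm{Tf}_{L^r_tL^q_x}^r/\norm{f}_p^p>0$. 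In the remaining case $Tf=0$ one has $\Phi(f)=0$ and $\norm{T(f+zg)}_{L^r_tL^q_x}^r=|z|^r\norm{Tg}_{L^r_tL^q_x}^r=o(|z|)$ since $r>1$, so $f$ is automatically a critical point and \eqref{eq:mixedEL} holds with $\lambda=0$; together these account for the full range $\lambda\in[0,\infty)$. (Running the argument backwards shows the condition is also sufficient, so \eqref{eq:mixedEL} in fact characterizes critical points, in parallel with Propositions~\ref{prop:unmixedEL} and~\ref{prop:mixedEL}.)

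The one step that needs attention is the bookkeeping in the division: I must verify that combining the two applications of Lemma~\ref{lemma:noop} with the Taylor expansion of the quotient produces a single remainder $O(|z|^{\tilde\gamma})$ with $\tilde\gamma$ strictly above $1$ and independent of $z$ (it may depend on $f$ and $g$). This is immediate, since $\norm{f+zg}_p^r$ remains bounded below near $z=0$ and the lemma already supplies exponents greater than $1$. Everything else — repackaging the first-order terms as $\Re(z\,\cdot)$, identifying $T^*$, checking that $h\in L^{r'}_tL^{q'}_x$ so that $T^*h\in L^{p'}$, and passing from vanishing against all $g\in L^p$ to a pointwise identity — is routine.
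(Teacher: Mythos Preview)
Your argument is correct and is exactly the approach the paper has in mind: the paper states the proposition as ``an immediate consequence'' of Lemma~\ref{lemma:noop} without further details, and you have simply written out that immediate consequence --- apply the lemma to $\norm{T(f+zg)}_{L^r_tL^q_x}^r$, apply its $q=r=p$ specialization to $\norm{f+zg}_p^p$, combine, and read off the vanishing of the first variation.
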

Again, it is a consequence of H\"older's inequality that the indicated function
belongs to  the domain $L^{r'}_tL^{q'}_x$ of the transposed operator $T^*$.

\begin{proof}[Proof of Lemma~\ref{lemma:noop}]
Let $\eps>0$ be a small exponent, to be chosen below.
Assume throughout the discussion that $|z|\le 1$.
Write $F_t(x)=F(x,t)$, $G_t(x)=G(x,t)$,
$\norm{F_t}=\norm{F(\cdot,t)}_{L^q_x}$,
and
$\norm{G_t}=\norm{G(\cdot,t)}_{L^q_x}$.
Define
\begin{gather}
\Omega_t=\{x\in\reals^{d-1}: |zG(x,t)|\le |z|^\eps|F(x,t)| \text{ and } F(x,t)\ne 0\}
\\
\omega=\{t: 
\norm{F_t}\ne 0 \text{ and }
|z|\,\norm{G_t}\le |z|^\eps\norm{F_t}\}.
\end{gather}

Fix $\rho\in(0,q-1)$. 
For $x\in\Omega_t$, 
expand
\begin{equation*}
|1+zG(x,t)/F(x,t)|^q
=1+q\Re(zG(x,t)/F(x,t))+O(|zG(x,t)/F(x,t)|^{1+\rho})
\end{equation*}
to obtain
\begin{align*}
\int_{\Omega_t} |(F+zG)(x,t)|^q\,dx
&= 
\int_{\Omega_t} |F(x,t)|^q\,dx
+q
\int_{\Omega_t}|F(x,t)|^{q}\Re(zG(x,t)/F(x,t))\,dx
\\
&\qquad\qquad +O\big(
|z|^{1+\rho}
\norm{F_t}^{q-1-\rho}
\norm{G_t}^{1+\rho}
\big).
\end{align*}

The contribution of $\reals^{d-1}\setminus\Omega_t$ 
is negligible, because of the following three
bounds: 
\begin{equation} \label{neg1}
\int_{\reals^{d-1}\setminus\Omega_t} 
|F(x,t)|^q\,dx
\le 
\int_{\reals^{d-1}\setminus\Omega_t} 
|z|^{(1-\eps)q} 
|G(x,t)|^q\,dx
=
|z|^{(1-\eps)q} 
\norm{G_t}^q;
\end{equation}
similarly
\begin{equation} \label{neg2}
\int_{\reals^{d-1}\setminus\Omega_t} 
|F(x,t)|^q|\Re(zG(x,t)/F(x,t))|\,dx
\le C
|z|^{q-C\eps} 
\norm{G_t}^q;
\end{equation}
and
\begin{equation} \label{neg3}
\begin{aligned}
\int_{\reals^{d-1}\setminus\Omega_t} 
|F(x,t)+zG(x,t)|^q\,dx
&\le 2^q
\int_{\reals^{d-1}\setminus\Omega_t} \big(|F(x,t)|^q+|z|^q|G(x,t)|^q\big)\,dx
\\
&\le C |z|^{q-C\eps} \norm{G_t}^q.
\end{aligned}
\end{equation}
Define 
\[H(t) = \norm{F_t}+\norm{G_t}.\]
Then $\int_\reals H(t)^r\,dt<\infty$. 
We have shown that if
$\eps>0$ is chosen to be sufficiently small, depending on $q$, then
\begin{multline}
\int_{\reals^{d-1}} |F(x,t)+zG(x,t)|^q\,dx
\\
= 
\norm{F_t}^q
+q\int_{\reals^{d-1}}|F(x,t)|^q\Re(zG(x,t)/F(x,t))\,dx
+O\big(|z|^{1+\sigma}\norm{G_t}^{1+\sigma}H(t)^{q-1-\sigma}\big)
\end{multline}
for all sufficiently small $\sigma>0$. 


Suppose that $t\in\omega$.
For any $|z|\ll 1$,
\begin{equation*}
|z|^{1+\sigma}\norm{G_t}^{1+\sigma}\norm{F_t}^{-q}H(t)^{q-1-\sigma}
\le |z|^{(1+\sigma)\eps}\norm{F_t}^{1+\sigma-q}H(t)^{q-1-\sigma}
=O(|z|^{(1+\sigma)\eps})
\ll 1.
\end{equation*}
Similarly,
by H\"older's inequality,
\begin{equation}\label{minbound}
\norm{F_t}^{-q}
\int_{\reals^{d-1}}|F(x,t)|^q\Re(zG(x,t)/F(x,t))\,dx
=O\big(\min(|z|\norm{G_t}\norm{F_t}^{-1},|z|^\eps)\big)\ll 1.
\end{equation}
Therefore for all sufficiently small $z\in\complex$,
\begin{align*}
\norm{F_t}^{-r}(\int_{\reals^{d-1}}  |F+zG|^q\,dx)^{r/q}
&
= 
\Big(
1
+q
\norm{F_t}^{-q}
\int_{\reals^{d-1}}|F(x,t)|^q\Re(zG(x,t)/F(x,t))\,dx
\\
&\qquad\qquad
+O(|z|^{1+\sigma}\norm{G_t}^{1+\sigma}\norm{F_t}^{-q}H(t)^{q-1-\sigma})
\Big)^{r/q}
\\
&= 1 
+ r
\norm{F_t}^{-q}
\int_{\reals^{d-1}}|F(x,t)|^q\Re(zG(x,t)/F(x,t))\,dx
\\
&\qquad
+O(|z|^{1+\sigma}\norm{G_t}^{1+\sigma}H(t)^{-1-\sigma})
\\
&\qquad+
O\big(
\norm{F_t}^{-q}
\int_{\reals^{d-1}}|F(x,t)|^q|zG(x,t)/F(x,t)|\,dx
\big)^2
\\
&= 1 
+ r
\norm{F_t}^{-q}
\int_{\reals^{d-1}}|F(x,t)|^q\Re(zG(x,t)/F(x,t))\,dx
\\
&\qquad
+O(|z|^{1+\sigma}\norm{G_t}^{1+\sigma}\norm{F_t}^{-1-\sigma}),
\end{align*}
provided that $\sigma<1$,
using \eqref{minbound} to deduce the final line.
Provided that $\sigma$ is chosen to satisfy $\sigma < \min(r-1,1)$,
an application of H\"older's inequality now yields
\begin{multline}\label{havefound}
\int_\omega(\int_{\reals^{d-1}}  |F+zG|^q\,dx)^{r/q}
\\
=  
\int_\omega\norm{F_t}^r
+ r
\int_\omega\norm{F_t}^{r-q}
\int_{\reals^{d-1}}|F(x,t)|^q\Re(zG(x,t)/F(x,t))\,dx
+O\big(
|z|^{1+\sigma}
\big).
\end{multline}

It remains to verify that the contribution of $\reals\setminus\omega$ is negliglible.
If $t\notin\omega$ then $\norm{F_t}\le|z|^{1-\eps}\norm{G_t}$,
so
\begin{equation}
(\int_{\reals^{d-1}}|F(x,t)+zG(x,t)|^q\,dx)^{1/q}
\le C|z|^{1-\eps}\norm{G_t}
\end{equation}
and consequently
\begin{equation}
\label{neg4}
\int_{\reals\setminus\omega}(\int_{\reals^{d-1}}|F(x,t)+zG(x,t)|^q\,dx)^{q/r}\,dt
\le C|z|^{(1-\eps)r}
\int_{\reals\setminus\omega}\norm{G_t}^r\,dt
=O(|z|^{(1-\eps)r});
\end{equation}
in the same way,
\begin{equation}
\label{neg5}
\int_{\reals\setminus\omega} \norm{F_t}^r\,dt
\le 
\int_{\reals\setminus\omega} 
|z|^{(1-\eps)r}
\norm{G_t}^r\,dt
=O( |z|^{(1-\eps)r}).
\end{equation}
Finally
\begin{equation} \label{neg6}
\begin{aligned}
\int_{\reals\setminus\omega}
\norm{F_t}^{r-q}
\int_{\reals^{d-1}}|F(x,t)|^q|\Re(zG(x,t)/F(x,t))|\,dx\,dt
&\le |z|
\int_{\reals\setminus\omega}
\norm{F_t}^{r-1}\norm{G_t}\,dt
\\
&\le
|z|^{r-C\eps}
\int_{\reals\setminus\omega}
\norm{G_t}^r\,dt
\\
&=O( |z|^{r-C\eps}).
\end{aligned}
\end{equation}
In conjunction with \eqref{havefound}, the three bounds
\eqref{neg4},\eqref{neg5},\eqref{neg6} complete the proof
once $\eps$ is chosen to be sufficiently small.
\end{proof}

\section{The case $p=2$ and $q=r$}\label{p=3,q=r}
Functions in $L^p(\Pp{d-1},\sigma)$ may be identified with functions
in $L^p(\reals^{d-1})$ via the correspondence
$f(y,|y|^2/2)=g(y)$ for $y\in\reals^{d-1}$.
We will often make this identification without further comment.
Thus a function $g\in L^p(\reals^{d-1})$ is said to satisfy
the equation \eqref{EL}, if the corresponding function
$f(y,|y|^2/2)=g(y)$ does so. We will sometimes write $g\sigma$,
for $g\in L^p(\reals^{d-1})$,
as shorthand for $f\sigma$, where $f,g$ corresponding in this way.

\begin{lemma} \label{lemma:symmetry}
Fix $p,d$ and let $q=q(p,d)$.
Suppose that $f\in L^2(\reals^{d-1})$ satisfies the Euler-Lagrange equation \eqref{EL}.
Then so does the function
$y'\mapsto \rho f(rAy'+v)e^{iy'\cdot w}$ for any $r>0$,
$\rho\in\complex\setminus\{0\}$,
$A\in O(d-1)$, and $v,w\in\reals^{d-1}$.
\end{lemma}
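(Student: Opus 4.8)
\emph{Proof proposal.} The plan is to reduce the statement to the five elementary operations $f\mapsto\rho f$ (with $\rho\in\complex\setminus\{0\}$), $f(y')\mapsto f(y')e^{iy'\cdot w}$, $f(y')\mapsto f(y'+v)$, $f(y')\mapsto f(ry')$, and $f(y')\mapsto f(Ay')$ with $A\in O(d-1)$, since $y'\mapsto\rho f(rAy'+v)e^{iy'\cdot w}$ is a composition of operations of these types, and the set of solutions of \eqref{EL} with $\lambda$ ranging over $(0,\infty)$ will be shown to be preserved by each one. For each operation I would first record, by a one-line change of variables in the integral defining $\widehat{f\sigma}$, how it transforms the extension: multiplication by $\rho$ multiplies $\widehat{f\sigma}$ by $\rho$; the modulation $f(y')e^{iy'\cdot w}$ translates $\widehat{f\sigma}$ by $w$ in the first $d-1$ frequency variables; the translation $f(y'+v)$ produces $e^{i(\xi'\cdot v-\xi_d|v|^2/2)}\,\widehat{f\sigma}(\xi'-\xi_d v,\xi_d)$, i.e.\ a unimodular factor times the volume-preserving shear $(\xi',\xi_d)\mapsto(\xi'-\xi_d v,\xi_d)$; the dilation $f(ry')$ produces $r^{-(d-1)}\widehat{f\sigma}(\xi'/r,\xi_d/r^2)$; and the rotation $f(Ay')$ produces $\widehat{f\sigma}(A\xi',\xi_d)$.

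Next I would feed these into \eqref{EL}. In every case $|\widehat{g\sigma}|^{q-2}\widehat{g\sigma}$ equals a nonzero constant times a unimodular (possibly $\xi$-dependent) phase times $(|\widehat{f\sigma}|^{q-2}\widehat{f\sigma})\circ\Lambda$ for an invertible affine map $\Lambda$ of $\reals^d$ with constant Jacobian; applying the standard dilation, translation, and modulation rules for the Fourier transform on $\reals^d$, the inverse Fourier transform is again a nonzero constant times a unimodular phase times $(|\widehat{f\sigma}|^{q-2}\widehat{f\sigma})^\vee\circ\Lambda^{*}$ for an affine bijection $\Lambda^{*}$ of $\reals^d$ determined by $\Lambda$. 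One then checks that $\Lambda^{*}$ carries $\Pp{d-1}$ onto $\Pp{d-1}$ and, under the identification $(z',|z'|^2/2)\leftrightarrow z'$, induces on $\reals^{d-1}$ precisely the map $z'\mapsto z'$, $z'+v$, $rz'$, or $Az'$ corresponding to the operation in question; restricting to $\Pp{d-1}$ and invoking \eqref{EL} for $f$ then yields \eqref{EL} for $g$, with the new constant equal to the old $\lambda$ multiplied by $|\rho|^{q-p}$ in the scaling case, by a (real, hence positive) power of $r$ in the dilation case, and unchanged in the remaining three cases — so positivity of the constant is preserved. All of this is legitimate because $|\widehat{f\sigma}|^{q-2}\widehat{f\sigma}\in L^{q/(q-1)}(\reals^d)$, so by the restriction bound recorded after Proposition~\ref{prop:unmixedEL} both sides of the relevant identities have well-defined restrictions to $\Pp{d-1}$ in $L^{p/(p-1)}(\Pp{d-1},\sigma)$, and the identities pass to such functions by approximation by Schwartz functions.

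The only place where anything beyond bookkeeping occurs is the translation $f(\cdot+v)$: there $\widehat{f\sigma}$ acquires the \emph{quadratic} phase $e^{-i\xi_d|v|^2/2}$ alongside the linear one, and the step I expect to require the most care is checking that, after taking the inverse Fourier transform and restricting to $\Pp{d-1}$, all of the accumulated phases recombine — via completing the square $|y'|^2+2v\cdot y'+|v|^2=|y'+v|^2$ — so that $\Lambda^{*}$ really is $y'\mapsto y'+v$ on the paraboloid and no residual phase survives. For the modulation the analogous check is that the phase produced is exactly the factor $e^{iy'\cdot w}$ already present in $|g|^{p-2}g=|\rho|^{p-2}\rho\,e^{iy'\cdot w}\,|f(rAy'+v)|^{p-2}f(rAy'+v)$. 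As an alternative that avoids identifying $\Lambda^{*}$ explicitly, one could instead note that $\Phi_{p,d}$ is exactly invariant under the group generated by these operations: the identities $\|g\|_p^p=|\rho|^p r^{-(d-1)}\|f\|_p^p$ and $\|\widehat{g\sigma}\|_q^q=|\rho|^q r^{(d+1)-(d-1)q}\|\widehat{f\sigma}\|_q^q$ combine, using $q(1-p^{-1})=(d+1)/(d-1)$ from \eqref{qdef}, to give $\Phi_{p,d}(g)=\Phi_{p,d}(f)$; and since these operations together with their inverses preserve the Schwartz class, critical points of $\Phi_{p,d}$ — hence, by Proposition~\ref{prop:unmixedEL}, solutions of \eqref{EL} — are carried to critical points.
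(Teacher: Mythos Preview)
Your proposal is correct. The paper gives no proof at all --- it simply writes ``The proof is left to the reader'' --- so there is nothing to compare against; what you have supplied is precisely the routine symmetry verification the authors are leaving implicit, and both of your routes (tracking each operation through \eqref{EL} directly, or observing that $\Phi_{p,d}$ is invariant under the group generated by these operations and invoking Proposition~\ref{prop:unmixedEL}) are valid ways to fill the gap.
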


The proof is left to the reader.
To prove our main result,
it suffices to consider henceforth the radial Gaussian
$f(y)=e^{-|y|^2/2}$, $y\in\reals^{d-1}$, for which
$\widehat{f\sigma}(x,t)=u(x,t)$ takes the form
\begin{equation}
\label{gaussianFT}
\begin{split}
u(x,t)=
\widehat{f\sigma}(x,t)
&=
\int e^{-ix\cdot y}e^{-it|y|^2/2}e^{-|y|^2/2}\,dy
\\
&= 
(2\pi)^{(d-1)/2}
(1+it)^{-(d-1)/2}e^{-|x|^2/2(1+it)}.
\end{split}
\end{equation}
Throughout the discussion we will encounter real powers of
$1\pm it$ and of $q-1-it$. 
These are always interpreted as the 
corresponding powers of
$\log(1\pm it)$ and of $\log(q-1-it)$ respectively,
where the branch of log is chosen so that 
$\log(1)=0$ and
$\log(1+it)$
is analytic in the complement of the ray $\{is: s\in [1,\infty)\}$, 
while $\log(1-it)$ and $\log(q-1-it)$
are both analytic in the complement of the ray $\{-is: s\in [1,\infty)\}$, 
with values $0$ and $\log(q-1)$ respectively when $t=0$. 
Thus
\begin{align*}
|u|^{q-2} u
&= 
(2\pi)^{(q-1)(d-1)/2}
(1+t^2)^{-(d-1)(q-2)/4}
(1+it)^{-(d-1)/2}
e^{-|x|^2
\Big(\frac{1-it}{1+t^2}+\frac{q-2}{1+t^2}\Big)/2
}
\\
&=
(2\pi)^{(q-1)(d-1)/2}
(1+t^2)^{-(d-1)(q-2)/4}
(1+it)^{-(d-1)/2}
e^{-|x|^2 (q-1-it)/2(1+t^2) } .
\end{align*}


We now begin to analyze the inverse Fourier transform
$\iint e^{ix\cdot y}e^{it|y|^2/2} |u(x,t)|^{q-2}u(x,t)\,dx\,dt$
by calculating the integral with respect to $x\in\reals^{d-1}$.
\begin{align*}
\int_{\reals^{d-1}} e^{ix\cdot y}
e^{-\tfrac12|x|^2 \frac{q-1-it}{1+t^2} }dx
=
(2\pi)^{(d-1)/2}
\Big(\frac{q-1-it}{1+t^2}\Big)^{-(d-1)/2}
e^{-\tfrac12|y|^2 \frac {1+t^2} {q-1-it} }.
\end{align*}
Thus
\begin{align*}
\big(|u|^{q-2}u\big)^\vee&(y,|y|^2/2)
= 
(2\pi)^{q(d-1)/2}
\\
&
\int_\reals
e^{it|y|^2/2}
(1+t^2)^{-(d-1)(q-2)/4}
(1+it)^{-(d-1)/2}
\Big(\frac{q-1-it}{1+t^2}\Big)^{-(d-1)/2}
e^{-\tfrac12|y|^2 \frac {1+t^2} {q-1-it} }
\,dt
\end{align*}
which simplifies to
\begin{equation} \label{key}
(2\pi)^{q(d-1)/2}
 \int_\reals
(1+it)^{-(d-1)(q-2)/4}
(1-it)^{-\tfrac14(d-1)(q-2) + \tfrac12(d-1)}
(q-1-it)^{-(d-1)/2}
e^{\tfrac12|y|^2 \big( it - \frac {1+t^2} {q-1-it} \big)}
\,dt.
\end{equation}

Consider first the case $p=2$. Then $q = 2(d+1)/(d-1) = 2+\frac{4}{d-1}$,
so $(d-1)(q-2)/4=1$ and the integral with respect to $t\in\reals$ becomes
\begin{align*}
(2\pi)^{q(d-1)/2}
\int_\reals
(1+it)^{-1}(1-it)^{(d-3)/2}(q-1-it)^{-(d-1)/2}
e^{a\big( it - \frac {1+t^2} {q-1-it} \big)}
\,dt
\end{align*}
where $a = |y|^2/2$.
This may be evaluated by deformation of the contour of integration through
the upper half-plane in $\complex$.
In the upper half-plane, the integrand is meromorphic with a single pole at
$t=i$. Therefore the integral equals
\begin{multline*}
(2\pi i)
(2\pi)^{q(d-1)/2}
i^{-1}2^{(d-3)/2}q^{-(d-1)/2}
e^{a \big( i\cdot i - \frac {1+i^2} {q-1-i\cdot i} \big)}
= 
(2\pi)^{d+2}2^{(d-3)/2}q^{-(d-1)/2}
e^{-a}
\\
=
(2\pi)^{d+2}2^{(d-3)/2}q^{-(d-1)/2}
e^{-|y|^2/2}
=
(2\pi)^{d+2}2^{(d-3)/2}q^{-(d-1)/2}
f(y).
\end{multline*}
Since $p=2$,
$f\equiv |f|^{p-2}f$ for $p=2$ and thus
the Euler-Lagrange equation \eqref{EL} is indeed satisfied. 

\medskip
Now consider the general mixed-norm case.
The Euler-Lagrange equation is modified via
the factor $\norma{\widehat{f\sigma}(\cdot,t)}_{L^q_x}^{r-q}$. By \eqref{gaussianFT},
\[\norma{\widehat{f\sigma}(\cdot,t)}_{L^q_x}^{r-q}=\frac{(2\pi)^{\frac{1}{2}(r-q)(d-1)(1+1/q)}}{q^{(d-1)(r-q)/2q}}(1+t^2)^{-\frac{1}{4q}(d-1)(r-q)(q-2)}.\]
Set
\begin{multline} \label{Jintegral}
J(a)=\int_\R (1+i t)^{-\frac{r}{4q}(d-1)(q-2)}(1-it)^{-\frac{r}{4q}(d-1)(q-2)+\frac{1}{2}(d-1)}
\\ \cdot
(q-1-it)^{-\frac{1}{2}(d-1)}e^{a(it-\frac{1+t^2}{q-1-it})}dt.
\end{multline}
Since $p=2$, the Euler-Lagrange equation \eqref{ELmixedwithFT}
is satisfied if and only if $J(a)$ is a constant multiple of $e^{-a}$.
Using the equation \eqref{strichartzexponents} which relates $q$ to $r$, 
$J(a)$ simplifies to
\[J(a)=\int_\R (1+i t)^{-1}(1-it)^{\frac{1}{2}(d-3)}(q-1-it)^{-\frac{1}{2}(d-1)}e^{a(it-\frac{1+t^2}{q-1-it})}dt,\]
which was shown above to be
a constant multiple of $e^{-a}$. 

\section{The case $p\ne 2$}

We will use the following simple lemma.
\begin{lemma}
\label{integration}
Let $H(t):\Cc\to\Cc$ be holomorphic on the upper half plane $\{\Im(t)> 0\}$ and continuous in its closure, and suppose that $\ab{(1+it)^\gamma H(t)}=O(\ab{t}^{-1-\delta})$ as $\ab{t}\to\infty$, for some $\delta>0$. Then for $\gamma>-1$,
\[\int_\R (1+it)^\gamma H(t)dt=-2\sin(\gamma\pi)\int_0^\infty y^\gamma H(i+iy)dy,\]
and for $\gamma=-1$
\[\int_\R (1+it)^\gamma H(t)dt=2\pi H(i).\]
\end{lemma}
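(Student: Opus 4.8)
The plan is to evaluate the integral by a contour deformation argument exploiting the branch cut of $(1+it)^\gamma$. Recall the convention fixed above: $\log(1+it)$ is analytic off the ray $\{is:s\in[1,\infty)\}$, so $(1+it)^\gamma$ has its branch cut precisely along that vertical ray starting at $t=i$. First I would consider the case $\gamma>-1$. Since $H$ is holomorphic on $\{\Im(t)>0\}$ and continuous up to the real axis, and $(1+it)^\gamma H(t)$ has the decay $O(|t|^{-1-\delta})$, I would push the contour $\R$ upward into the upper half plane and collapse it onto the two sides of the branch cut $\{is:s\ge 1\}$. Concretely, integrate over the boundary of the region $\{\Im(t)>0\}\setminus\{is:s\ge1\}$: the large semicircle contributes nothing by the decay hypothesis, there is no pole (the only possible singularity of the integrand in the upper half plane is the branch point at $t=i$, which is integrable since $\gamma>-1$), so the integral over $\R$ equals the integral coming down the right side of the cut plus the integral going up the left side of the cut.

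Next I would compute the jump of $(1+it)^\gamma$ across the cut. Parametrize $t=i+iy$, $y>0$, so $1+it=1+i(i+iy)=-y$ — wait, more carefully: approaching the cut from the right ($\Re t\to 0^+$) versus from the left ($\Re t\to 0^-$), $1+it$ approaches the negative real number $-y$ from below and from above respectively. With the chosen branch ($\log 1=0$, analytic off the upward ray), $\log(1+it)$ has values $\log y - i\pi$ and $\log y + i\pi$ on the two sides, so $(1+it)^\gamma$ equals $y^\gamma e^{-i\pi\gamma}$ and $y^\gamma e^{i\pi\gamma}$ respectively. Accounting for the orientation (down one side, up the other) and the factor $i\,dy$ from $dt = i\,dy$, the two cut contributions combine to
\[
\int_\R (1+it)^\gamma H(t)\,dt = \big(e^{-i\pi\gamma}-e^{i\pi\gamma}\big)\, i\int_0^\infty y^\gamma H(i+iy)\,dy = -2\sin(\gamma\pi)\int_0^\infty y^\gamma H(i+iy)\,dy,
\]
using $e^{-i\pi\gamma}-e^{i\pi\gamma} = -2i\sin(\gamma\pi)$. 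I would double-check the sign by orientation bookkeeping, since that is the one place an error could creep in.

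For $\gamma=-1$ the situation is simpler: $(1+it)^{-1}$ is meromorphic in the upper half plane with a single simple pole at $t=i$ and no branch cut, and $H$ is holomorphic there. The decay hypothesis again kills the contribution of the large semicircle, so closing the contour in the upper half plane and applying the residue theorem gives $\int_\R (1+it)^{-1}H(t)\,dt = 2\pi i \cdot \operatorname{Res}_{t=i}\big[(1+it)^{-1}H(t)\big]$. Since $1+it = i(t-i)$, the residue is $H(i)/i$, yielding $2\pi H(i)$ as claimed.

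The main obstacle I anticipate is purely bookkeeping: pinning down the sign and the phase factors $e^{\mp i\pi\gamma}$ correctly, which requires being careful about (i) which side of the cut corresponds to which boundary value of $\log(1+it)$ under the stipulated branch, and (ii) the orientation with which each side of the slit is traversed when the contour is collapsed. The analytic input — Cauchy's theorem, integrability of $y^\gamma$ near $y=0$ for $\gamma>-1$, and the decay hypothesis controlling the arc at infinity — is routine. One should also note that the hypothesis "$|(1+it)^\gamma H(t)| = O(|t|^{-1-\delta})$" is exactly what is needed both to justify the vanishing of the semicircular arc and to guarantee that $\int_0^\infty y^\gamma H(i+iy)\,dy$ converges at $y=\infty$ (there $|(1+it)^\gamma H(t)|\sim y^{-1-\delta}$ along the cut as well).
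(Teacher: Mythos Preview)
Your approach is exactly the paper's: the paper simply says the formula ``is obtained via contour integration in the region $\{\Im(t)\geq 0\}\setminus\{iy:y\in[1,\infty)\}$,'' which is precisely the collapse-onto-the-branch-cut argument you outline, together with the residue computation for $\gamma=-1$. One small correction to your bookkeeping (which you rightly flagged as the danger spot): when $\Re t\to 0^+$ one has $1+it=-y+i\epsilon$, so $1+it$ approaches $-y$ from \emph{above}, not below, and hence the right side of the cut carries the phase $e^{+i\pi\gamma}$; your final displayed identity is nevertheless correct once the orientation is tracked consistently.
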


This is obtained via contour integration in the region $\{\Im(t)\geq 0\}\setminus\{iy:y\in[1,\infty)\}$.
As a consequence of Lemma~\ref{integration} we have the following: 
Suppose that $H$ is real-valued, nonnegative when restricted to the imaginary axis, and satisfies $H(i)> 0$. 
If $\gamma\geq -1$, then $\int_\R (1+it)^\gamma H(t)dt=0$ if and only if $\gamma\geq 2$ is an integer.

Define $I:[0,\infty)\to \mathbb C$ by
\begin{equation} \label{Iintegral}
I(a)=\int_\R 
(1+i t)^{-\frac{1}{4}(d-1)(q-2)}
(1-it)^{-\frac{1}{4}(d-1)(q-2)+\frac{1}{2}(d-1)}
(q-1-it)^{-\frac{1}{2}(d-1)}e^{a(it-\frac{1+t^2}{q-1-it})}dt
\end{equation}
where $d\geq 2$, and $q> \frac{2d}{d-1}$ is defined by \eqref{qdef}.
The integrand is 
\[O(t^{-\frac{(d-1)(q-2)}{2}}e^{-a(q-1)\frac{1+t^2}{(q-1)^2+t^2}})\]
and since $q>2$ and $\frac{(d-1)(q-2)}{2}>1$, it belongs to $L^1(\R)$ for all $a\geq 0$. We note that $I(\frac{1}{2}\ab{y})$ equals the expression in \eqref{key} up to constant.

Our goal is to demonstrate:
\begin{lemma}
As a function of $a\in[0,\infty)$,
the function $I$ is a constant multiple of $e^{-(p-1)a}$ only if $p=2$.
\end{lemma}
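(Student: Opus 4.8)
\medskip\noindent\textbf{Proof strategy.}
The plan is to make an explicit M\"obius change of variables that exhibits $I(a)$, up to an exponential prefactor, as (a Hadamard regularization of) the Laplace transform of a strictly positive density on the bounded interval $(0,q-2)$; injectivity of the Laplace transform then forces $p=2$. Write $m=\tfrac12(d-1)$ and $\gamma=-\tfrac14(d-1)(q-2)$, so the powers in \eqref{Iintegral} are $(1+it)^{\gamma}$, $(1-it)^{\gamma+m}$, $(q-1-it)^{-m}$, and set $\psi(t)=it-\tfrac{1+t^{2}}{q-1-it}$. One first records the identities $\psi(t)=\tfrac{it(q-1)-1}{q-1-it}$, $\psi(t)+(q-1)=\tfrac{q(q-2)}{q-1-it}$ and $\psi'(t)=\tfrac{iq(q-2)}{(q-1-it)^{2}}$, and notes that the hypothesis $q>\tfrac{2d}{d-1}$ is exactly $-2\gamma-2>-1$, which is what makes the integrand of \eqref{Iintegral} lie in $L^{1}(\R)$.

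In \eqref{Iintegral} I would substitute $v=\tfrac{q(q-2)}{q-1-it}$. This carries $\R$ onto the circle $\Gamma=\{\,|v-\tfrac R2|=\tfrac R2\,\}$, $R:=\tfrac{q(q-2)}{q-1}$, which lies in $\{\Re v\ge 0\}$ and is tangent to the imaginary axis at $v=0$ (the image of $t=\pm\infty$). Since $1+it=\tfrac{q(v-(q-2))}{v}$, $1-it=\tfrac{(q-2)(q-v)}{v}$, $q-1-it=\tfrac{q(q-2)}{v}$, $e^{a\psi(t)}=e^{-a(q-1)}e^{av}$ and $dt=-\tfrac{iq(q-2)}{v^{2}}\,dv$, this turns \eqref{Iintegral} into
\[
I(a)=C\,e^{-a(q-1)}\oint_{\Gamma}(v-(q-2))^{\gamma}(q-v)^{\gamma+m}v^{-2\gamma-2}e^{av}\,dv
\]
for an explicit nonzero constant $C=C(d,q)$, with branches inherited from \eqref{Iintegral}. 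The key geometric facts are: the branch cut of $(1+it)^{\gamma}$ maps onto the segment $[0,q-2]$ (the image of $\{t=is:s\ge 1\}$); the branch points produced by $(1-it)^{\gamma+m}$ and $(q-1-it)^{-m}$ all map outside $\Gamma$; and $q-2\in(0,R)$ lies inside $\Gamma$ while $q\ge R$ lies outside.

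Next I would deform $\Gamma$ onto $[0,q-2]$. Inside $\Gamma$ the integrand is holomorphic off $[0,q-2]$, and on that segment its only singularity, apart from the integrable endpoint $v=0$ (near which it is $O(v^{-2\gamma-2})$ with $-2\gamma-2>-1$), sits at $v=q-2$, the image of $t=i$. When $\gamma\notin\ZZ$, collapsing the contour produces the jump $e^{i\gamma\pi}-e^{-i\gamma\pi}=2i\sin(\gamma\pi)$ across the cut and yields $I(a)=c\,e^{-a(q-1)}\,\mathrm{f.p.}\!\int_{0}^{q-2}s^{-2\gamma-2}(q-2-s)^{\gamma}(q-s)^{\gamma+m}e^{as}\,ds$, with $c=\pm2\sin(\gamma\pi)\cdot(\text{positive constant})\ne 0$; the finite part (a regularization at $s=q-2$) is needed precisely when $\gamma<-1$ and is superfluous when $\gamma>-1$, and the density $s^{-2\gamma-2}(q-2-s)^{\gamma}(q-s)^{\gamma+m}$ is strictly positive on $(0,q-2)$. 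When $\gamma\in\ZZ$ one has $\gamma\le-2$ (since $\gamma=-1$ is exactly $p=2$), the factor $(v-(q-2))^{\gamma}$ has a pole rather than a branch point, and the integral equals $2\pi i$ times the residue at $v=q-2$; as the remaining factor is holomorphic and nonzero there, the residue is a nonconstant polynomial in $a$ times $e^{a(q-2)}$, so $I(a)$ is a nonconstant polynomial in $a$ times $e^{-a}$. To conclude, note that $I\propto e^{-(p-1)a}$ is equivalent to $M(a):=e^{a(q-1)}I(a)\propto e^{(q-p)a}$, so it suffices to show that for $p\ne2$, $M$ is not a constant multiple of any exponential. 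In the integer case $M(a)$ is a nonconstant polynomial times $e^{(q-2)a}$, done. In the non-integer case, putting $u=q-2-s$ gives $M(a)=c\,e^{a(q-2)}N(a)$ with $N(a)=\mathrm{f.p.}\int_{0}^{q-2}u^{\gamma}\Psi(u)e^{-au}\,du$, where $\Psi(u)=(q-2-u)^{-2\gamma-2}(2+u)^{\gamma+m}>0$ on $(0,q-2)$ and is smooth near $u=0$; choosing an integer $j$ with $\gamma+j>-1$ and differentiating, $N^{(j)}(a)=(-1)^{j}\int_{0}^{q-2}u^{\gamma+j}\Psi(u)e^{-au}\,du$ is a genuine integral (no finite part), namely $(-1)^{j}$ times the Laplace transform of the nonnegative, not-a.e.-zero function $u^{\gamma+j}\Psi(u)\one_{(0,q-2)}$. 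By injectivity of the Laplace transform this is not a constant multiple of any exponential, whereas $M\propto e^{(q-p)a}$ would force $N\propto e^{(2-p)a}$ and hence $N^{(j)}\propto e^{(2-p)a}$ (using $c\ne0$ and $I\not\equiv0$) --- a contradiction. Hence $I$ is a constant multiple of $e^{-(p-1)a}$ only if $\gamma=-1$, i.e.\ only if $p=2$; and for $p=2$, Lemma~\ref{integration} gives $I(a)=2\pi H(i)$, a positive constant times $e^{-a}$, consistent with Section~\ref{p=3,q=r}.

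I expect the main obstacle to be Step~2, the contour deformation: confirming that $\Gamma$ is precisely the stated circle so that $v=q-2$ is the only enclosed singularity and $[0,q-2]$ the only cut inside; tracking the branches of the three power factors through the substitution so as to pin down both the factor $\sin(\gamma\pi)$ and the strict positivity of the density; and, when $\gamma<-1$, justifying the Hadamard finite part together with the interchange of $\tfrac{d^{j}}{da^{j}}$ with it. An alternative would be to stay in the $t$-plane and apply Lemma~\ref{integration} directly after integrating by parts in $t$ enough times to raise the exponent of $1+it$ above $-1$, but the change of variables above is cleaner because it avoids the polynomial-in-$a$ factors that those integrations by parts generate.
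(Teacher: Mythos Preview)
Your approach is correct but genuinely different from the paper's. The paper splits into two cases. For $p<2$ it multiplies $I(a)$ by $e^{a}$, expands the resulting exponential factor in power series in $a$, and obtains coefficients $I_k$ which it evaluates via Lemma~\ref{integration} (contour deformation onto the ray $\{i(1+y):y\ge 0\}$); the key observation is that $\operatorname{sign}(I_k)$ alternates for large $k$ when $(d-1)(q-2)/4\notin\integers$, which is incompatible with $I_k=c\big(\tfrac{2-p}{q-2}\big)^k$, while in the integer case all but finitely many $I_k$ vanish and one that does not is exhibited. For $p>2$ it instead expands $e^{(p-1)a}I(a)$ and shows directly that the even-indexed coefficients $I_k'$ are nonzero, so the series is not constant.

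Your route performs the contour deformation \emph{before} expanding in $a$: the M\"obius map $v=q(q-2)/(q-1-it)$ sends the paper's ray to the segment $[0,q-2]$, and the whole integral becomes (after the Hadamard regularization when $\gamma<-1$) an explicit Laplace-type integral against a strictly positive density; Laplace uniqueness then finishes. This is more unified (no case split on $p\lessgtr 2$) and explains structurally why exactly $p=2$ is forced---it is the unique value for which the branch point at $v=q-2$ degenerates to a simple pole. The cost is more machinery: careful tracking of all three branches through the substitution, justification of the collapse of $\Gamma$ through the boundary point $v=0$, and the interchange of $\partial_a^{j}$ with the finite-part operator. The paper's argument, by contrast, needs nothing beyond Lemma~\ref{integration} and a sign check; it is more elementary, though somewhat more ad hoc. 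A small remark: your final step can be made even shorter by observing that the Laplace transform $L(a)=\int_0^{q-2}u^{\gamma+j}\Psi(u)e^{-au}\,du$ satisfies $(\log L)''=(LL''-(L')^{2})/L^{2}>0$ by Cauchy--Schwarz (with equality only for point masses), so $\log L$ is strictly convex and hence not affine; this avoids invoking injectivity of the Laplace transform on measures.
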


\begin{proof}

{\bf Case 1} : $p<2$.
Consider
\[e^{a}I(a)
=\int_\R (1+i t)^{-\frac{1}{4}(d-1)(q-2)}
(1-it)^{-\frac{1}{4}(d-1)(q-2)+\frac{1}{2}(d-1)}
(q-1-it)^{-\frac{1}{2}(d-1)}e^{a\frac{(q-2)(1+it)}{q-1-it}}dt.\]	
Expanding the exponential in power series and interchanging integral and sum gives
\[e^a I(a)=\sum\limits_{k=0}^{\infty}\frac{a^k}{k!}(q-2)^k I_k,\]
where 
\[I_k=\int_\R (1+i t)^{k-\frac{1}{4}(d-1)(q-2)}H_k(t)dt,\]
with
\[H_k(t)=(1-it)^{-\frac{1}{4}(d-1)(q-2)+\frac{1}{2}(d-1)}(q-1-it)^{-k-\frac{1}{2}(d-1)}.\]
$H_k$ satisfies the hypothesis of Lemma \ref{integration}, and $H_k(iy)>0$ for all $y\geq 0$.

Now $e^aI(a)$ is a constant multiple of $e^{-(p-2)a}$ if and only if there exists $c\in\mathbb C$ such that for all $k\geq 0$,
\begin{equation}
\label{eqn}
I_k=c\Bigl(\frac{2-p}{q-2}\Bigr)^k.
\end{equation}
Let $k_0=\lceil (d-1)(q-2)/4\rceil$, the smallest integer $\geq (d-1)(q-2)/4$, and consider any $k\geq k_0$. By Lemma \ref{integration},
\[I_k=-2\sin(\alpha_k\pi)\int_0^\infty y^{k-\frac{1}{4}(d-1)(q-2)}H_k(i+iy)dy\]
where $\alpha_k=k-(d-1)(q-2)/4$.

Suppose first that $p$ is such that $(d-1)(q-2)/4$ is not an integer, so $I_k\neq 0$. 
Now $\sin(\alpha_{k+1}\pi)=-\sin(\alpha_k\pi)$ and thus $I_k$ is alternating while $c(2-p)^k(q-2)^{-k}$ is not. 
If $p$ is such that $(d-1)(q-2)/4$ is an integer (necessarily $\geq 2$ as $p\neq 2$) and \eqref{eqn} holds 
we get that $c=0$ since $J_k=0$ for $k\geq k_0$. On the other hand, $k_0-1\geq 1$ and $I_{k_0-1}\neq 0$, for
\begin{align*}
I_{k_0-1}=\pi 2^{-\frac{1}{4}(d-1)(q-2)+\frac{1}{2}(d-1)+1}q^{-k_0-\frac{1}{2}(d-1)+1},
\end{align*}
by Lemma \ref{integration}. 

\noindent
{\bf Case 2:} $p>2$.
It is now convenient to work with
\begin{multline*}
e^{(p-1)a}I(a)
=
\\
\int_\R (1+i t)^{-\frac{1}{4}(d-1)(q-2)}(1-it)^{-\frac{1}{4}(d-1)(q-2)+\frac{1}{2}(d-1)}
(q-1-it)^{-\frac{1}{2}(d-1)}e^{a(p-1+it-\frac{1+t^2}{q-1-it})}dt;
\end{multline*}
we need to show that this expression is not constant, as a function of $a\in[0,\infty)$.
For $2<p\leq 2d/(d-1)$, $\frac{(d-1)(q-2)}{4}=\frac{d-1}{4(p-1)}-\frac{d-3}{4}$ 
lies in $[1/2,1)$. 
Therefore the integrand has an integrable singularity at $t=i$, so we may expand the exponential factor
in the integrand in power series to obtain an analogue of $I_k$:
\[ e^{(p-1)a}I(a) =\sum_{k=0}^\infty \frac{a^k}{k!}I_k'\]
where
\[I_k'=\int_\R (1+i t)^{-\frac{1}{4}(d-1)(q-2)}H_k'(t)dt,\]
with

\[H'_k(t)=(1-it)^{-\frac{1}{4}(d-1)(q-2)+\frac{1}{2}(d-1)}(q-1-it)^{-k-\frac{1}{2}(d-1)}(pq-p-q+(q-p)it)^k.\]

$H_k'$ satisfies the hypothesis of Lemma \ref{integration}, is real when restricted to the imaginary axis and nonnegative at least when $k$ is an even integer.

Lemma~\ref{integration} gives 
\begin{equation} \label{endpne}
I_k'=2
\sin(\tfrac{1}{4}(d-1)(q-2)\pi)
\int_0^\infty y^{-\frac{1}{4}(d-1)(q-2)}H_k(i+iy)dy.
\end{equation}
Since $(d-1)(q-2)/4\in [\tfrac12,1)$, the factor $\sin(\frac{1}{4}(d-1)(q-2)\pi)$ is nonzero.
If $k$ is an even positive integer,
then the integrand is nonnegative,
so the integral in \eqref{endpne} is likewise nonzero. 
\end{proof}

\end{document}